\newtheorem{theorem}{Theorem}
\newtheorem{proposition}[theorem]{Proposition}
\newtheorem{corollary}[theorem]{Corollary}
\theoremstyle{definition}
\newtheorem{definition}{Definition}
\newtheorem{example}{Example}
\newtheorem{remark}{Remark}
\newtheorem{lemma}{Lemma}
\begin{document}

\begin{frontmatter}

\title{Exponential Splitting for Nonautonomous Linear Discrete-Time Systems in Banach Spaces}

\author[adresa,adresaa]{Mihail Megan}
\ead{megan@math.uvt.ro}

\author[adresaalba]{Ioan-Lucian Popa}
\ead{lucian.popa@uab.ro}

\address[adresa]{Academy of Romanian Scientists, Independen\c tei
54, 050094 Bucharest, Romania}
\address[adresaa]{Department of Mathematics,
Faculty of Mathematics and Computer Science, West University of
Timi\c soara, V.  P\^ arvan Blv. No. 4, 300223-Timi\c soara,
Romania}
\address[adresaalba]{Department of Mathematics, Faculty of Exact Sciences and Engineering, University "1 Decembrie" 1918 of Alba Iulia, Alba Iulia, 510009, Romania}

\begin{abstract}
In this paper we consider some concepts of exponential splitting for
nonautonomous linear discrete-time systems. These concepts are
generalizations of some well-known concepts of (uniform and
nonuniform) exponential dichotomies. Connections between these
concepts are presented and some illustrating examples prove that
these are distinct.
\end{abstract}
\begin{keyword} nonautonomous linear discrete-time systems,
exponential splitting, strong exponential splitting


 \MSC[2010] 34D05, 39A05
\end{keyword}
\end{frontmatter}
\section{Introduction}

The notion of exponential dichotomy introduced by O. Perron for
differential equations in \cite{perron} and by Ta Li \cite{tali} for
difference equations plays a central role in a large part of the
theory of dynamical systems.

The notion of dichotomy for differential equations has gained
prominence since the appearance of two fundamental monographs of
J.L. Massera,  J.J. Sch$\ddot{a}$ffer \cite{massera} and J.L.
Daleckii, M.G. Krein \cite{daleckii}. These were followed by the
important book ok W.A. Coppel \cite{coppel} who synthesized and
improved the results that existed in the literature up to 1978.

The interest in the counterpart results in difference equations
appeared in the paper of C.V. Coffman and J.J. Sch$\ddot{a}$ffer
\cite{coffman} and later, in 1981 when D. Henry included discrete
dichotomies in his book \cite{henry}. This was followed by the
classical monographs due to R.P. Agarwal \cite{agarwal} where the
dichotomy properties of discrete-time systems are studied.
Significant work was reported by C.  P$\ddot{o}$tzsche in
\cite{potche}. Notable contributions in dichotomy theory of
discrete-time systems has been also obtained in
(\cite{berezanski},\cite{huy},\cite{huy2},\cite{papashi},\cite{pinto},\cite{popa},\cite{megan},\cite{sasu},\cite{zhang}).

The most important dichotomy concept used in the qualitative theory
of dynamical systems is the uniform exponential dichotomy. In some
situations, particularly in the nonautonomous setting, the concept
of uniform exponential dichotomy is too restrictive and it is
important to consider more general behaviors.

Two different perspectives can be identified to generalize the
concept of uniform exponential dichotomy, one can define dichotomies
that depends on the initial time (and therefore are nonuniform) and,
 on the other hand, one can consider growth rates which do not imply an
exponential dichotomy behavior, in particular exponential splitting.

The first approach leads to concepts of nonuniform exponential
(respectively polynomial) dichotomies for difference equations and
can be found in the works of L. Barreira, C. Valls (\cite{bareira2},
\cite{bareira3}), A. Bento, C. Silva (\cite{bento1}, \cite{bento2},)
and L. Barreira, M. Fan, C. Valls and Z. Jimin \cite{barreira1}.

The second approach is presented in the papers of  B. Aulbach, J.
Kalbrenner \cite{aulbach1}, B. Aulbach S. Siegmund \cite{aulbach2}.

In this paper we consider two concepts of exponential splitting for
linear discrete-time systems in Banach spaces. These concepts use
two ideas of projections sequences: invariant and strongly
invariant for the respective discrete-time system, (although, in case
of invertible systems, they are equivalent). These two types of
projections sequences are distinct even in the finite dimensional
case. For each of these concepts (exponential splitting and strong
exponential splitting) we consider three important particular cases:
uniform exponential splitting, exponential dichotomy and uniform
exponential dichotomy respectively, uniform strong exponential
splitting, strong exponential dichotomy, and uniform strong
exponential dichotomy. We give characterizations of these concepts
and present connections (implications and counterexamples) between
them.

We note that we consider difference equations whose right-hand sides
are not supposed to be invertible and the splitting concepts studied
in this paper use the evolution operators in forward time. The study of noninvertible systems is of great importance and in this sense we point out the paper of B. Aulbach and J. Kalkbrenner \cite{aulbach1}, where is introduced the notion of exponential forward splitting, motivated by the fact that there are differential equations whose backward solutions are not guaranteed to exist. This approach is of interest in applications, see for example, dynamical systems generated by random parabolic equations, are not invertible (for more details see L. Zhou et al. \cite{zhou}). Also, considering asymptotic rates of the form $e^{c\rho (n)},$ where $\rho :\mathbb{N}\to\mathbb{R}$ is an increasing function, which thus may correspond to infinite Lyapunov exponents, we obtain a concept of nonuniform exponential splitting which does not assume exponential boundedness of the splitting projections, and not only  the usual exponential behavior  with $\rho (n)=n.$ For more details regarding the arbitrary growth rates we may refer to \cite{barreira4}. Also, we prove that in the particular case when the splitting projections are exponentially bounded then the two splitting concepts presented in this paper are equivalent.

\section{Preliminaries}
Let $X$ be a Banach space and ${B}(X)$ the Banach space of all
bounded linear operators on $X.$ The norms on $X$ and on ${B}(X)$
will be denoted by $\parallel\cdot\parallel .$ The identity operator
on $X$ is denoted by $I.$ If $A\in{B}(X)$ then we shall denote by
$Ker\;A$ the kernel of $A$ i.e.
\begin{equation*}
Ker\;A=\{x\in X\;\text{with}\;Ax=0\}
\end{equation*}
respectively
\begin{equation*}
Range\;A=\{Ax\;\text{with}\;x\in X\}.
\end{equation*}
We also denote by $\Delta$ the set of all pairs of all natural
numbers $(m,n)$ with $m\geq n$ i.e.
$$\Delta=\{(m,n)\in\mathbb{N}^2\;\text{with}\;m\geq n\}.$$
We also consider
$$T=\{(m,n,p)\in\mathbb{N}^3\;\text{with}\;m\geq n\geq p\}.$$

We consider the linear discrete-time system
\begin{equation*}\tag{$\mathfrak{A}$}\label{A}
x_{n+1}=A_{n}x_{n},
\end{equation*}
where $(A_n)$ is a sequence in ${B}(X).$ We associate to the system
(\ref{A}) the map
 \begin{equation*}\label{eqAmn}
{A}_m^n=\left\{
\begin{array}{l l}
A{_{m-1}}\cdot \ldots\cdot  A_{n}, &\;\;\text{if}\; m > n\\
I, &\;\;\text{if}\; m=n\\
\end{array}\right.
 \end{equation*}
 which is called the {\it evolution operator associated to
 (\ref{A}).}

 It is obvious that
\begin{equation*}
{A}_m^n{A}_n^p={A}_m^p,\;\;\text{for all}\;\;(m,n,p)\in T
\end{equation*}
and every solution of (\ref{A}) satisfies
\begin{equation*}
x_m={A}_m^nx_n\;\;\text{for all}\;\;(m,n)\in\Delta.
\end{equation*}
If for every $n\in\mathbb{N}$ the operator $A_n$ is invertible then
the system (\ref{A}) is called {\it reversible.}
\begin{definition}\label{D: Proj}
A sequence $P:\mathbb{N}\rightarrow{B}(X)$ is called a {\it
projections sequence} if
$$P(n)^2=P(n),\;\;\text{for every}\;\;n\in\mathbb{N}.$$
\end{definition}
In what follows we denote $P(n)=P_n$ for every $n\in\mathbb{N}.$
\begin{remark}
If $P$ is a projections sequence then $Q=I-P$ is also a
projections sequence (which is called {\it the complementary
projections sequence of $P$}) with
$$Ker\; Q_n=Range\; P_n\;\text{and}\;Range\; Q_n=Ker\; P_n$$
 for every $n\in\mathbb{N},$ where $Q_n=Q(n).$
\end{remark}
\begin{definition}\label{D: comp}
A projection sequence $P$ is called {\it invariant} for the system
(\ref{A}) if
$$A_nP_n=P_{n+1}A_n,\;\;\text{for all}\;\;n\in\mathbb{N}.$$
\end{definition}
\begin{remark}
If $P$ is invariant for (\ref{A}) then its complementary $Q$ is
invariant for (\ref{A}). Furthermore we have
$${A}_m^nP_n=P_m{A}_m^n\;\;\text{and}\;\;{A}_m^nQ_n=Q_m{A}_m^n$$
for all $(m,n)\in\Delta.$
\end{remark}
\begin{remark}
If $P$ is invariant for (\ref{A}) then
$${A}_m^n(Ker\;P_n)\subset Ker\;P_m\;\;\text{and}\;\;{A}_m^n(Range\;P_n)\subset Range\;P_m$$
for all $(m,n)\in\Delta.$
\end{remark}
\begin{definition}\label{D: P-expb}
A projections sequence $P$ is called {\it exponentially bounded}
if there are $M,p\geq 1$ such that
$$\parallel P_n\parallel\leq Mp^n,\;\;\text{for every}\;\;n\in\mathbb{N}$$
In the particular case when $p=1,$ $P$ is called {\it bounded.}
\end{definition}
\begin{remark}
A projections sequence $P$ is exponentially bounded if and only if
there are $M,\omega\geq 1$ such that
\begin{equation*}
\parallel P_n\parallel\leq
Me^{\omega n},\;\;\text{for all}\;\;n\in\Bbb N.
\end{equation*}
\end{remark}
\begin{lemma}\label{L: 1}
Let $P$ and $R$ be two projections sequences with complementary
$Q$ respectively $S$ and with the property
$$Range\;P_n=Range\;R_n,\;\;\text{for all}\;\;n\in\mathbb{N}.$$
Then
\begin{description}

\item[{\it{($r_1$)}}] $P_nR_n=R_n;$

\item[{\it{($r_2$)}}] $R_nP_n=P_n;$

\item[{\it{($r_3$)}}] $Q_nS_n=Q_n=(I+R_n-P_n)S_n;$

\item[{\it{($r_4$)}}] $S_nQ_n=S_n=(I+P_n-R_n)Q_n;$
\end{description}
for all $n\in\mathbb{N}.$
\end{lemma}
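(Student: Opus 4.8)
The plan is to fix an arbitrary $n \in \mathbb{N}$ and suppress it from the notation, writing $P, Q, R, S$ for $P_n, Q_n, R_n, S_n$; since all four identities are asserted pointwise in $n$, it suffices to verify them at this fixed index. The single tool I would rely on is the elementary characterization of the range of a projection: if $E$ is a projection on $X$, then $Ez = z$ if and only if $z \in \mathrm{Range}\,E$. Indeed, $z \in \mathrm{Range}\,E$ gives $z = Ew$, whence $Ez = E^2 w = Ew = z$, and the converse is immediate. This fact, combined with the hypothesis $\mathrm{Range}\,P = \mathrm{Range}\,R$, is what drives the two mixed-product identities $(r_1)$ and $(r_2)$.

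For $(r_1)$ I would argue that for every $x \in X$ the vector $Rx$ lies in $\mathrm{Range}\,R = \mathrm{Range}\,P$, so that $P(Rx) = Rx$; as $x$ is arbitrary, $PR = R$. Identity $(r_2)$ is perfectly symmetric: $Px \in \mathrm{Range}\,P = \mathrm{Range}\,R$ forces $R(Px) = Px$, hence $RP = P$. These are the only two places where the range hypothesis is used, and I expect them to be the conceptual core of the lemma; everything afterwards is formal algebra.

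With $(r_1)$ and $(r_2)$ in hand, I would obtain $(r_3)$ and $(r_4)$ by direct expansion, using the complementarity relations $Q = I - P$ and $S = I - R$ together with $RS = R(I - R) = 0$ and $PQ = P(I - P) = 0$. For $(r_3)$: expanding $QS = (I - P)(I - R) = I - R - P + PR$ and substituting $PR = R$ collapses this to $I - P = Q$; and since $RS = 0$, one has $(I + R - P)S = S - PS = (I - P)S = QS = Q$, which gives the second equality. For $(r_4)$: $SQ = (I - R)(I - P) = I - P - R + RP = I - R = S$ after substituting $RP = P$, while $PQ = 0$ yields $(I + P - R)Q = Q - RQ = (I - R)Q = SQ = S$.

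I do not anticipate any genuine obstacle here; the only subtlety worth flagging is the recognition that the full strength of the hypothesis is already captured by $(r_1)$ and $(r_2)$, so the remaining identities are best derived from those rather than from the range equality a second time. Accordingly I would present $(r_1)$ and $(r_2)$ first and then treat $(r_3)$ and $(r_4)$ as short algebraic consequences.
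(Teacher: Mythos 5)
Your proposal is correct and follows essentially the same route as the paper: the range equality yields $(r_1)$ (and $(r_2)$ by symmetry), and $(r_3)$, $(r_4)$ then follow by expanding with $Q_n=I-P_n$, $S_n=I-R_n$ and the annihilation identities $R_nS_n=0$, $P_nQ_n=0$. Your write-up merely makes explicit the steps the paper compresses into a single chain of equalities.
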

\begin{proof}
$(r_1)$ If $x\in X$ and $n\in\mathbb{N}$ then there is $x_0\in X$
with $R_n x=P_n x_0.$ Then $P_nR_nx=P_n^2x_0=P_nx_0=R_nx_0.$

$(r_2)$ It follows from $(r_1).$

$(r_3)$ We observe that
\begin{align*}
(I+R_n-P_n)S_n &=(R_n+Q_n)S_n=Q_nS_n=(I-P_n)(I-R_n)=\\
&=I-P_n=Q_n.
\end{align*}

$(r_4)$ It follows from $(r_3)$ by changing $P_n$ with $R_n.$
\end{proof}
\begin{definition}\label{D: P-strinv}
A sequence of projections $P$ is called {\it strongly invariant}
for the system (\ref{A}) if $P$ is invariant for (\ref{A}) and for
all $(m,n)\in\Delta$ the restriction of ${A}_m^n$ at $Ker\;P_n$ is
an isomorphism from $Ker\;P_n$ to $Ker\;P_m.$
\end{definition}
\begin{remark}
If the projections sequence $P$ is invariant for the reversible
system (\ref{A}) then it is also strongly invariant for (\ref{A}).

Indeed, if $P$ is invariant for the reversible system (\ref{A})
then ${A}_m^n$ is injective and
$${A}_m^n(Ker\;P_n)\subset Ker\;P_m,\;\;\text{for all}\;\;(m,n)\in\Delta.$$
Moreover, for every $y\in Ker\;P_m$ we have that
$$y={A}_m^n\left({A}_m^n\right)^{-1}y\;\;\text{with}\;\;x=\left({A}_m^n\right)^{-1}y\in Ker\;P_n.$$
Thus ${A}_m^n$ is surjective, which implies that $P$ is strongly
invariant for (\ref{A}).
\end{remark}
\begin{remark}
If the projections sequence $P$ is strongly invariant for the system
(\ref{A}) then there exists ${B}:\Delta\rightarrow{B}(X),$
${B}(m,n)={B}_m^n$ such that ${B}_m^n$ is an isomorphism from
$Ker\;P_m$ to $Ker\;P_n$ and
\begin{description}

\item[{\it{($b_1$)}}] ${A}_m^n{B}_m^nQ_m=Q_m;$

\item[{\it{($b_2$)}}] ${B}_m^n{A}_m^nQ_n=Q_n;$
\end{description}
for all $(m,n)\in\Delta.$

The application ${B}$ is called {\it the skew-evolution operator}
associated to the pair $(\text{\ref{A}},P).$
\end{remark}
\begin{remark}
If the projections sequence $P$ is invariant for the reversible
system (\ref{A}) then it is strongly invariant for (\ref{A}) and
the skew-evolution operator associated to the pair
$(\text{\ref{A}},P)$ is
$${B}_m^n=\left({A}_m^n\right)^{-1},\;\;\text{for all}\;\;(m,n)\in\Delta.$$
\end{remark}
For nonreversible systems there are invariant projections sequences
which are not strongly invariant. This fact is illustrated by
\begin{example}\label{E: 11}
Let $X=\Bbb{R}^{3}$ and let $(P_n)$ be the projections sequence
defined by
\begin{equation*}
{P_n}(x_{1},x_{2},x_3) =\left\{
\begin{array}{l l}
(x_1,x_2,0) &\;\;\text{if}\;\; n=0\\
\left(x_{1}+\frac{x_2}{2^{n-1}},0,0\right) &\;\;\text{if}\;\;n\geq
1.\\
\end{array}\right.
\end{equation*}
Let ({\ref{A}}) be the linear discrete-time system generated by the
sequence
$$A_n(x_1,x_2,x_3)=(2x_1,a_nx_2,4x_3) $$
where
\begin{equation*}
a_n=\left\{
\begin{array}{l l}
0 &\;\;\text{if}\;\; n=0\\
    4 & \;\;\text{if}\;\;n\geq 1\\
\end{array}\right.
\;\;\text{and}\;\;x=(x_1,x_2,x_3)\in X.
\end{equation*}
It is easy to see that the evolution operator associated to system
(\ref{A}) is given by
\begin{equation*}
{A}_m^n(x_1,x_2,x_3)=\left\{
\begin{array}{l l}
 \left(2^mx_1,0,4^mx_3\right) &\;\;\text{if}\;\; m>n=0\\
    \left(2^{m-n}x_1,4^{m-n}x_2,4^{m-n}x_3\right) & \;\;\text{if}\;\;m>n\geq 1\\
    (x_1,x_2,x_3) &\;\;\text{if}\;\; m=n\\
\end{array}\right.
\end{equation*}
 for all $(m,n)\in\Delta$ and all $x=(x_1,x_2,x_3)\in X.$ We observe that
\begin{equation*}
 {A}_nP_n(x_1,x_2,x_3) =P_{n+1}A_n(x_1,x_2,x_3)=\left\{
\begin{array}{l l}
  \left(2x_1,0,0\right) &\;\;\text{if}\;\; n=0\\
    \left(2x_1+2^{2-n}x_2,0,0\right) & \;\;\text{if}\;\;n\geq 1\\
\end{array}\right.
\end{equation*}
and hence $(P_n)$ is invariant for (\ref{A}). It is not strongly
invariant because ${A}_1^0$ is not an isomorphism from $Ker\; P_0$
to $Ker\; P_1.$

Indeed, we observe that for $y=(1,-1,0)\in Ker\;P_1$ we have that
$$
y=(1,-1,0)\neq (2x_1,0,4x_3)={A}_1^0(x_1,x_2,x_3)
$$
 for all
$x=(x_1,x_2,x_3)\in Ker\;P_0.$
\end{example}
\begin{lemma}\label{L: 22}
If the projections sequence $P$ is strongly invariant for the
system (\ref{A}) then the skew-evolution operator associated to
the pair $(\text{\ref{A}},P)$ has the following properties
\begin{description}

\item[{\it{($b_3$)}}] $Q_n{B}_m^nQ_m={B}_m^nQ_m;$

\item[{\it{($b_4$)}}]$Q_m{B}_m^mQ_m={B}_m^mQ_m=Q_m;$

\item[{\it{($b_5$)}}]${B}_m^pQ_m={B}_n^p{B}_m^pQ_m;$
\end{description}
for all $(m,n,p)\in T.$
\end{lemma}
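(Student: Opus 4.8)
The plan is to prove the three properties $(b_3)$, $(b_4)$, $(b_5)$ by working directly from the defining properties $(b_1)$ and $(b_2)$ of the skew-evolution operator, together with the fact that $B_m^n$ maps $\mathrm{Ker}\,P_m$ isomorphically onto $\mathrm{Ker}\,P_n$. The central observation I would exploit throughout is that $Q_n$ is the complementary projection onto $\mathrm{Ker}\,P_n = \mathrm{Range}\,Q_n$, so a vector $x$ lies in $\mathrm{Ker}\,P_n$ if and only if $Q_n x = x$.

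First I would establish $(b_3)$. Since $B_m^n$ is by definition an isomorphism from $\mathrm{Ker}\,P_m$ to $\mathrm{Ker}\,P_n$, for any $x\in X$ the vector $B_m^n Q_m x$ belongs to $\mathrm{Ker}\,P_n = \mathrm{Range}\,Q_n$ (because $Q_m x \in \mathrm{Ker}\,P_m$). But any vector in $\mathrm{Range}\,Q_n$ is fixed by $Q_n$, hence $Q_n B_m^n Q_m x = B_m^n Q_m x$, which is exactly $(b_3)$. Next, for $(b_4)$ I would note that $A_m^m = I$, so by $(b_2)$ with $m=n$ we get $B_m^m A_m^m Q_m = B_m^m Q_m = Q_m$; the equality $Q_m B_m^m Q_m = Q_m$ then follows either from $(b_3)$ (taking $n=m$) or simply because $Q_m$ is idempotent.

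The main content lies in $(b_5)$, which expresses a cocycle-type composition rule for $B$ restricted to the kernels. Here I would use $(b_2)$ applied at the pair $(n,p)$, namely $B_n^p A_n^p Q_n = Q_n$, and combine it with the evolution identity $A_m^p = A_m^n A_n^p$ together with $(b_1)$. Concretely, starting from $B_m^p Q_m$, I would insert $A_m^p B_m^p Q_m = Q_m$ (property $(b_1)$ at $(m,p)$) and use the factorization of $A_m^p$; the idea is that $B_n^p B_m^p Q_m$ and $B_m^p Q_m$ are both elements of $\mathrm{Ker}\,P_p$ that the isomorphism machinery forces to coincide. Since $B_m^p Q_m$ already lies in $\mathrm{Ker}\,P_p$, I would verify that applying $B_n^p$ after first passing through $B_m^n$ reproduces the same vector, using that each $B$ inverts the corresponding $A$ on the relevant kernel.

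The step I expect to be the main obstacle is $(b_5)$, because the restrictions of $A$ and $B$ to the kernels must be tracked carefully: one cannot freely cancel $A$ and $B$ globally, only on $\mathrm{Ker}\,P$, so every manipulation must keep a trailing $Q$ to remain inside the subspace where $(b_1)$ and $(b_2)$ apply. The cleanest route is probably to show that both sides agree after applying the injective map $A_m^p$ (equivalently, after applying $A_n^p$ followed by $A_m^n$), and then invoke injectivity of $A_m^p$ on $\mathrm{Ker}\,P_p$ to conclude equality of the vectors themselves.
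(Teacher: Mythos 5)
Your treatment of $(b_3)$ and $(b_4)$ coincides with the paper's: $(b_3)$ is exactly the range argument ($B_m^nQ_mx\in Ker\;P_n=Range\;Q_n$, hence fixed by $Q_n$), and in $(b_4)$ you cancel $A_m^m=I$ inside $(b_2)$ where the paper cancels it inside $(b_1)$ --- a trivial variation. For $(b_5)$ your route is genuinely different, and it is sound. One caveat first: the printed identity $B_m^pQ_m=B_n^pB_m^pQ_m$ must be read as the cocycle identity $B_m^pQ_m=B_n^pB_m^nQ_m$, since the axioms $(b_1)$--$(b_2)$ only determine $B_n^p$ on $Ker\;P_n$, so the literal statement involves values of $B_n^p$ on $Ker\;P_p$ that are not pinned down; the paper's own (typo-ridden) computation indeed terminates at $B_n^pB_m^nQ_m$. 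You read it the intended way (``applying $B_n^p$ after first passing through $B_m^n$''), even though one sentence of yours copies the misprint. The paper proves the identity by a forward chain of substitutions: it inserts $(b_2)$ at the pair $(n,p)$ to replace $Q_p$ by $B_n^pA_n^pQ_p$, then uses $(b_2)$ at $(m,n)$, invariance, and finally $(b_1)$ at $(m,p)$, never invoking injectivity explicitly. You instead apply $A_m^p$ to both sides, check that each yields $Q_m$ (the left side by $(b_1)$ at $(m,p)$; the right side via the factorization $A_m^p=A_m^nA_n^p$, then $(b_3)$ at $(m,n)$, $(b_1)$ at $(n,p)$, and $(b_1)$ at $(m,n)$), and cancel using the injectivity of $A_m^p$ on $Ker\;P_p$, which is part of strong invariance since both sides lie in $Ker\;P_p$. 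Both arguments are valid; yours isolates the structural reason the identity holds --- both sides are the unique preimage of $Q_mx$ under the isomorphism $\restr{A_m^p}{Ker\;P_p}$ --- and is less error-prone, while the paper's stays purely within the operator identities $(b_1)$--$(b_3)$ and never needs to name the subspace on which the cancellation takes place.
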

\begin{proof}
{\it ($b_3$)} We observe that for all $(m,n,x)\in\Delta\times X$
we have that
$$Q_mx\in Range\;Q_m=Ker\;P_m$$
hence
$${B}_m^nx\in Ker\;P_n=Range\;Q_n$$
which implies
$$Q_m{B}_m^nQ_mx={B}_m^nQ_mx.$$

{\it ($b_4$)}
$Q_m{B}_m^nQ_n\stackrel{(b_3)}{=}{B}_m^mQ_n\stackrel{(b_1)}{=}Q_m.$

{\it ($b_5$)} If $(m,n,p)\in T$ then
\begin{align*}
{B}_m^pQ_m
&\stackrel{(b_3)}{=}Q_p{B}_m^pQ_m\stackrel{(b_2)}{=}{B}_n^p{A}_n^pQ_p{B}_m^pQ_m={B}_m^pQ_n{A}_n^pQ_p{B}_m^pQ_m=\\
&\stackrel{(b_2)}{=}{B}_n^p{B}_m^n{A}_m^nQ_n{A}_n^pQ_p{B}_m^nQ_m={B}_n^p{B}_m^n{A}_m^pQ_p{B}_m^pQ_m=\\
&\stackrel{(b_3)}{=}{B}_n^p{B}_m^p{A}_m^p{B}_m^pQ_m\stackrel{(b_1)}{=}{B}_n^p{B}_m^nQ_n.
\end{align*}
\end{proof}
\section{Exponential splitting with invariant projections}

In this section we consider a projections sequence $P$ which is
invariant for the system (\ref{A}). We shall denote by $Q$ the
complementary of $P.$
\begin{definition}\label{D: ES}
We say that the linear discrete-time system (\ref{A}) admits an {\it
exponential splitting (e.s.)} if there exist a projections sequence
$P$ invariant for (\ref{A}) and the constants $0<a<b,$ $N,c\geq 1$
such that
\begin{equation}\tag{$es_1$}\label{es1}
\parallel{A}_m^nP_nx\parallel\leq Nc^na^{m-n}\parallel
P_nx\parallel
\end{equation}
\begin{equation}\tag{$es_2$}\label{es2}
b^{m-n}\parallel Q_nx\parallel\leq N
c^m\parallel{A}_m^nQ_nx\parallel
\end{equation}
for all $(m,n,x)\in\Delta\times X.$ The constants $a$ and $b$ are
called the {\it growth rates} of (\ref{A}).
\end{definition}
If the system (\ref{A}) admits an exponential splitting with
\begin{description}

\item[{\it{(i)}}] $c=1$ then we say that (\ref{A}) admits an {\it
uniformly exponentially splitting (u.e.s.)};

\item[{\it{(ii)}}] $0<a<1<b$ then we say that (\ref{A}) is {\it
exponentially dichotomic (e.d.)};

\item[{\it{(iii)}}] $0<a<1<b$ and $c=1$ then we say that (\ref{A})
is {\it uniformly exponentially dichotomic (u.e.d.)};
\end{description}
\begin{remark}
The system (\ref{A}) admits an exponential splitting if and only if
there exist a projections sequence $P$ invariant for (\ref{A}) and
four real constants $\alpha <\beta,$ $\gamma \geq 0$ and $N\geq 1$
such that
\begin{equation}\tag{$es_1^{'}$}\label{es1'}
\parallel{A}_m^nP_nx\parallel\leq Ne^{\gamma n}e^{\alpha({m-n})}\parallel
P_nx\parallel
\end{equation}
\begin{equation}\tag{$es_2^{'}$}\label{es2'}
e^{\beta({m-n})}\parallel Q_nx\parallel\leq N e^{\gamma
m}\parallel{A}_m^nQ_nx\parallel
\end{equation}
for all $(m,n,x)\in\Delta\times X.$
\end{remark}
For the particular case of exponential dichotomy we have
\begin{proposition}\label{P: ed}
The system (\ref{A}) is exponentially dichotomic if and only if
there are three constants $N,c\geq 1$ and $d\in (0,1)$ such that
\begin{equation}\tag{$ed_1$}\label{ed1}
\parallel{A}_m^nP_nx\parallel\leq N c^nd^{m-n}\parallel
P_nx\parallel
\end{equation}
\begin{equation}\tag{$ed_2$}\label{ed2}
\parallel Q_nx\parallel\leq N
c^md^{m-n}\parallel{A}_m^nQ_nx\parallel
\end{equation}
for all $(m,n,x)\in\Delta\times X.$
\end{proposition}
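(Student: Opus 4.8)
The plan is to prove the equivalence by an elementary rescaling of the exponential bases in the two estimates, keeping the same invariant projections sequence $P$ throughout. Recall that, by Definition~\ref{D: ES} together with case~(ii), the system (\ref{A}) is exponentially dichotomic exactly when \eqref{es1} and \eqref{es2} hold for some invariant $P$ and constants $0<a<1<b$, $N,c\geq 1$. Since the constants $N$ and $c$ can be carried over unchanged in both directions, the whole content lies in matching the growth rates.

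For the implication from exponential dichotomy to \eqref{ed1}--\eqref{ed2}, I would set $d=\max\{a,\,1/b\}$. Because $a\in(0,1)$ and $b>1$ force $1/b\in(0,1)$, we get $d\in(0,1)$. Then \eqref{ed1} is immediate from \eqref{es1}: since $0<a\leq d<1$ and $m\geq n$, monotonicity of $t\mapsto t^{m-n}$ gives $a^{m-n}\leq d^{m-n}$, which only enlarges the right-hand side. For \eqref{ed2}, I would first rewrite \eqref{es2} as $\|Q_nx\|\leq Nc^{m}(1/b)^{m-n}\|A_m^nQ_nx\|$ and then use $1/b\leq d$ to replace $(1/b)^{m-n}$ by the larger factor $d^{m-n}$. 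The one point that needs care is that a single $d$ must serve in both inequalities at once, and this is precisely what dictates the choice $d=\max\{a,1/b\}$.

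For the converse, assuming \eqref{ed1}--\eqref{ed2}, I would take $a=d$ and $b=1/d$, so that the required chain $0<a<1<b$ holds automatically. With this choice \eqref{ed1} is literally \eqref{es1} with growth rate $a=d$, while multiplying \eqref{ed2} through by $(1/d)^{m-n}$ yields $(1/d)^{m-n}\|Q_nx\|\leq Nc^{m}\|A_m^nQ_nx\|$, which is exactly \eqref{es2} with $b=1/d$. Apart from this bookkeeping of bases there is no genuine obstacle, as no further property of the evolution operator beyond the two given inequalities is invoked.
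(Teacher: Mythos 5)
Your proof is correct and takes essentially the same route as the paper: keep $P$, $N$, $c$ unchanged and rescale only the growth rates, choosing a single $d\in(0,1)$ for necessity and $a=d$, $b=1/d$ for sufficiency. Note that your choice $d=\max\{a,\tfrac{1}{b}\}$ is the right one and in fact corrects a slip in the paper, which writes $d=\min\{a,\tfrac{1}{b}\}$ even though the inequalities $a^{m-n}\leq d^{m-n}$ and $b^{-(m-n)}\leq d^{m-n}$ used immediately afterwards require $d\geq a$ and $d\geq\tfrac{1}{b}$, i.e.\ $d=\max\{a,\tfrac{1}{b}\}$.
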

\begin{proof}
{\it Necessity.} If (\ref{A}) is (e.d.) then there are a
projections sequence $P$ invariant for (\ref{A}) and constants
$0<a<1<b,$ $N,c\geq 1$ such that the inequalities are satisfied.
If we denote by $d=\min \{a,\frac{1}{b}\}$ then $d\in (0,1)$ and
$$\parallel{A}_m^nP_nx\parallel\leq N c^na^{m-n}\parallel P_nx\parallel\leq Nc^nd^{m-n}\parallel P_nx\parallel$$
$$\parallel Q_nx\parallel\leq Nc^mb^{-({m-n})}\parallel{A}_m^nQ_nx\parallel\leq N c^md^{m-n}\parallel{A}_m^nQ_nx\parallel$$
for all $(m,n,x)\in\Delta\times X.$

{\it Sufficiency.} It is immediate.
\end{proof}
\begin{proposition}\label{P: A-inj}
If the system (\ref{A}) admits an exponential splitting then there
exists a projections sequence $P$ invariant for (\ref{A}) such
that for every $(m,n)\in\Delta$ the restriction of ${A}_m^n$ to
$Ker\;P_n$ is injective.
\end{proposition}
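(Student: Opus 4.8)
The plan is to use the very projections sequence $P$ furnished by the hypothesis of exponential splitting, together with the lower estimate $(es_2)$, which is precisely the inequality that governs the behaviour on $Ker\;P_n$. Concretely, I would take the invariant $P$ and the constants $0<a<b,$ $N,c\geq 1$ supplied by Definition \ref{D: ES}, and exploit the fact that $(es_2)$ bounds the norm of a vector from below in terms of the norm of its image under ${A}_m^n.$

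The key step is the following. I fix $(m,n)\in\Delta$ and take an arbitrary $y\in Ker\;P_n.$ Then $P_ny=0,$ so that $Q_ny=(I-P_n)y=y.$ Substituting $x=y$ in $(es_2)$ and using $Q_ny=y$ gives
$$b^{m-n}\|y\|\leq Nc^m\|{A}_m^n y\|.$$

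From this the injectivity follows immediately: if ${A}_m^n y=0$ then the right-hand side vanishes, and since $b^{m-n}>0$ I conclude $\|y\|=0,$ hence $y=0.$ Therefore the restriction of ${A}_m^n$ to $Ker\;P_n$ is injective for every $(m,n)\in\Delta,$ as required.

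I do not expect any genuine obstacle here: the whole content is the observation that $(es_2)$ directly encodes a one-sided (expanding) estimate on $Ker\;P_n,$ and such an estimate automatically rules out a nontrivial kernel for ${A}_m^n$ on that subspace. The only point deserving a moment's care is the identity $Q_ny=y$ for $y\in Ker\;P_n,$ which is nothing more than the characterization $Ker\;P_n=Range\;Q_n$ together with $Q=I-P$ recorded in the preliminaries.
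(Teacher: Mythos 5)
Your proof is correct and follows essentially the same route as the paper: both take the projections sequence $P$ from Definition \ref{D: ES}, use the identity $Q_nx=x$ for $x\in Ker\;P_n$, and apply the lower estimate (\ref{es2}) to conclude that a vector in $Ker\;P_n$ annihilated by ${A}_m^n$ must vanish. The only cosmetic difference is that the paper routes the final step through the invariance relation ${A}_m^nQ_n=Q_m{A}_m^n$, whereas you substitute $Q_ny=y$ directly, which is equivalent.
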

\begin{proof}
If system (\ref{A}) admits an (e.s.)  with projections sequence $P$
and $x\in Ker\;P_n\bigcap Ker\;{A}_m^n$ then by Definition \ref{D:
ES}, we obtain
$$\parallel x\parallel =\parallel Q_nx\parallel\leq Nb^{n-m}c^m\parallel{A}_m^nQ_nx\parallel=Nb^{n-m}c^m\parallel Q_m{A}_m^nx\parallel=0$$
hence $x=0.$
\end{proof}
For the case of reversible systems we can give a necessary and
sufficient condition for (e.s.) by
\begin{theorem}\label{T: esvsed}
The reversible system (\ref{A}) admits an exponential splitting if
and only if there are a projections sequence $P$ invariant for
(\ref{A}) and the constants $0<a<b,$ $N,c\geq 1$ such that
\begin{equation}\tag{$res_1$}\label{res1}
\parallel{A}_m^nP_nx\parallel\leq N c^na^{m-n}\parallel
P_nx\parallel
\end{equation}
\begin{equation}\tag{$res_2$}\label{res2}
b^{m-n}\parallel \left({A}_m^n\right)^{-1}Q_mx\parallel\leq N
c^n\parallel Q_nx\parallel
\end{equation}
for all $(m,n,x)\in\Delta\times X.$
\end{theorem}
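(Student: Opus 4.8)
The plan is to prove Theorem \ref{T: esvsed} by showing that for reversible systems, conditions \eqref{res1}--\eqref{res2} are equivalent to the defining conditions \eqref{es1}--\eqref{es2} of exponential splitting. Since \eqref{res1} is literally \eqref{es1}, the work reduces entirely to showing that \eqref{es2} and \eqref{res2} are interchangeable, under the invariance of $P$. The key structural fact I would invoke is that when $P$ is invariant for the reversible system, the operator $A_m^n$ commutes with $Q$ in the sense that $A_m^nQ_n=Q_mA_m^n$, and consequently $\left(A_m^n\right)^{-1}$ commutes with $Q$ as well, giving $\left(A_m^n\right)^{-1}Q_m=Q_n\left(A_m^n\right)^{-1}$. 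This commutation is what lets me translate freely between an estimate phrased in terms of $A_m^nQ_n$ (forward) and one phrased in terms of $\left(A_m^n\right)^{-1}Q_m$ (backward).

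First I would establish the necessity direction. Assuming \eqref{es2} holds, I fix $(m,n,x)\in\Delta\times X$ and apply \eqref{es2} to the vector $y=\left(A_m^n\right)^{-1}Q_mx$ in place of $x$. Using invariance, $Q_ny=Q_n\left(A_m^n\right)^{-1}Q_mx=\left(A_m^n\right)^{-1}Q_mx$, so the left-hand side of \eqref{es2} becomes $b^{m-n}\parallel\left(A_m^n\right)^{-1}Q_mx\parallel$, while the right-hand side involves $A_m^nQ_ny=A_m^n\left(A_m^n\right)^{-1}Q_mx=Q_mx$. Since $c^m\le c^m$ is the wrong exponent for \eqref{res2}, I would instead run the argument starting from \eqref{res2} and derive \eqref{es2}, which is cleaner: this is the sufficiency direction.

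For sufficiency, assuming \eqref{res2}, I again fix $(m,n,x)$ and substitute the vector $z$ with $Q_nx=Q_n z$ chosen so that the backward estimate produces the forward one. Concretely, set $x\mapsto A_m^nQ_nx$ on the right of \eqref{res2}: then $Q_m(A_m^nQ_nx)=A_m^nQ_nx$ by invariance, and $\left(A_m^n\right)^{-1}Q_m(A_m^nQ_nx)=\left(A_m^n\right)^{-1}A_m^nQ_nx=Q_nx$. Thus \eqref{res2} reads $b^{m-n}\parallel Q_nx\parallel\le Nc^n\parallel Q_n(A_m^nQ_nx)\parallel=Nc^n\parallel A_m^nQ_nx\parallel$, which is exactly \eqref{es2} with the sharper constant $c^n$ replacing $c^m$ (and since $c\ge 1$ and $m\ge n$, $Nc^n\le Nc^m$, so \eqref{es2} follows). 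Running this chain of substitutions in reverse, using that $\left(A_m^n\right)^{-1}$ is the skew-evolution operator $B_m^n$ guaranteed by the reversibility remark, yields the converse.

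The main obstacle I anticipate is bookkeeping rather than conceptual: I must verify that each substituted vector genuinely lies in the correct range (so that $Q$ acts as the identity on it) and that the invariance identities $A_m^nQ_n=Q_mA_m^n$ and $Q_n\left(A_m^n\right)^{-1}=\left(A_m^n\right)^{-1}Q_m$ are applied in the right direction at each step. I would want to check carefully that the two exponents $c^n$ versus $c^m$ do not create a logical gap in the equivalence; because $c\ge 1$ each direction weakens in a controlled way, so the existence of constants making one set of inequalities hold is genuinely equivalent to the existence of (possibly different) constants making the other hold. The reversibility hypothesis is essential and used precisely in passing from the forward operator to its inverse, so I would flag that this theorem has no analogue in the merely strongly-invariant noninvertible case treated elsewhere.
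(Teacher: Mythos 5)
You follow the same route as the paper: reduce the theorem to the equivalence (\ref{es2})$\Longleftrightarrow$(\ref{res2}), exploit the commutation relations $A_m^nQ_n=Q_mA_m^n$ and $\left(A_m^n\right)^{-1}Q_m=Q_n\left(A_m^n\right)^{-1}$, and use the same substitutions ($x\mapsto \left(A_m^n\right)^{-1}Q_mx$ in one direction, $x\mapsto A_m^nQ_nx$ in the other). But as written your argument has two genuine gaps. First, the necessity direction is never completed. Your computation correctly yields
\begin{equation*}
b^{m-n}\parallel \left(A_m^n\right)^{-1}Q_mx\parallel\;\leq\; Nc^m\parallel Q_mx\parallel ,
\end{equation*}
you note that this does not match the printed right--hand side $Nc^n\parallel Q_nx\parallel$ of (\ref{res2}), and you then simply switch to the other implication, asserting later that ``running the substitutions in reverse'' together with $c\geq 1$ closes the loop. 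It does not: the obstruction is not only the exponent $c^m$ versus $c^n$ (which could at best be traded against $b$, at the risk of violating $a<b$), but the index on the projection. Nothing converts $\parallel Q_mx\parallel$ into $\parallel Q_nx\parallel$, because no boundedness of the projections is assumed here (in Example \ref{E: 2}, $\parallel P_n\parallel =2^{n^2}$). In fact no proof of the printed necessity can exist: putting $x$ with $Q_nx=0$ into (\ref{res2}) and using injectivity of $\left(A_m^n\right)^{-1}$ forces $Q_mx=0$, i.e. $Range\;P_n\subset Range\;P_m$ for all $m\geq n$, a property an exponentially split reversible system need not have (e.g. a uniform dichotomy whose stable direction rotates). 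The inequality (\ref{res2}) must be read with $Nc^m\parallel Q_mx\parallel$ on the right; that corrected statement is exactly what the paper's displayed computations establish (its final lines carry the same $n$/$m$ typos).

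Second, your sufficiency step uses a false identity: after substituting $x\mapsto A_m^nQ_nx$ you claim $\parallel Q_n(A_m^nQ_nx)\parallel=\parallel A_m^nQ_nx\parallel$. By invariance $A_m^nQ_nx=Q_mA_m^nx\in Ker\;P_m$, and $Q_n$ acts as the identity on $Ker\;P_n$, not on $Ker\;P_m$; when the kernels move with time these norms differ badly (in Example \ref{E: 2} by a factor of order $2^{m^2-n^2}$). The identity you proved in the preceding sentence, $Q_m(A_m^nQ_nx)=A_m^nQ_nx$, is the valid one, and it is the one the paper uses --- but it matches (\ref{res2}) only after the same $Q_n\to Q_m$ correction. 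So in substance your substitutions coincide with the paper's proof of the corrected statement; as a proof of the statement as printed, however, one direction is missing and the other rests on an invalid equality. The repair is to correct the statement's indices, not to force them.
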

\begin{proof}
It is sufficient to prove the equivalence
(\ref{es2})$\Longleftrightarrow$(\ref{res2}). If (\ref{es2}) holds
then
\begin{align*}
b^{m-n}\parallel\left({A}_m^n\right)^{-1}Q_mx\parallel
&=b^{m-n}\parallel
Q_n\left({A}_m^n\right)^{-1}x\parallel\\
&\leq
Nc^m\parallel{A}_m^nQ_n\left({A}_m^n\right)^{-1}x\parallel\\
&=Nc^m\parallel Q_nx\parallel
\end{align*}
 for all $(m,n,x)\in\Delta\times X.$

 Conversely, from (\ref{res2}) it results
 \begin{align*}
b^{m-n}\parallel Q_nx\parallel &=b^{m-n}\parallel
\left({A}_m^n\right)^{-1}Q_m{A}_m^nQ_nx\parallel\\
&\leq
Nc^n\parallel Q_m{A}_m^nQ_nx\parallel\\
&=Nc^n\parallel {A}_m^nQ_nx\parallel
\end{align*}
for every $(m,n,x)\in\Delta\times X.$
\end{proof}
\begin{theorem}\label{T: projPR}
Let $P$ and $R$ be two projections sequences with complementarily
$Q$ and $S.$ Let $P$ and $Q$ be exponentially bounded and
$Range\;P_n=Range\;R_n$ for every $n\in\mathbb{N}.$ If system
(\ref{A}) admits an exponential splitting with projections
sequence $P$ then it also admits an exponential splitting with
respect to $R.$
\end{theorem}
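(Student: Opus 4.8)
The plan is to show that if the system admits an exponential splitting with $P$, then the estimates $(es_1)$ and $(es_2)$ transfer to $R$ and its complement $S$, using the relations established in Lemma \ref{L: 1} together with the exponential boundedness of $P$ and $Q$. The key observation is that since $Range\;P_n = Range\;R_n$, the invariance of $P$ is inherited by $R$ (the stable direction is the same subspace at each time), so $R$ is invariant for (\ref{A}) as well; this should be checked first, since an exponential splitting requires an invariant projection. From Lemma \ref{L: 1} we have the bridges $P_nR_n = R_n$, $R_nP_n = P_n$, $Q_nS_n = Q_n$, $S_nQ_n = S_n$, which let me pass back and forth between the two projection families.

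First I would establish $(es_1)$ for $R$. Starting from ${A}_m^nR_nx$, I would write $R_n x = P_n R_n x$ (from $(r_1)$) and use invariance of $P$ to get ${A}_m^n R_n x = {A}_m^n P_n (R_n x) = P_m {A}_m^n R_n x$, then apply the hypothesis $(es_1)$ with the vector $R_n x$ in place of $x$ to bound $\parallel {A}_m^n P_n (R_n x)\parallel \leq N c^n a^{m-n}\parallel P_n R_n x\parallel = N c^n a^{m-n}\parallel R_n x\parallel$. This gives $(es_1)$ for $R$ with the same growth rate $a$ and the same constant $c$, and a new front constant that is still $N$ (no boundedness of $P$ is even needed here, since $P_n R_n = R_n$ exactly). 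The growth rate $a$ is preserved.

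The genuinely delicate part is $(es_2)$ for $S$, and this is where exponential boundedness of $P$ and $Q$ enters. Here the relation $(r_3)$ reads $Q_n S_n = Q_n = (I + R_n - P_n) S_n$, so $S_n x = Q_n x + (R_n - P_n) S_n x$; the complement $S_n$ is not simply $Q_n$ composed with something bounded by a constant, so I must control $S_n x$ in terms of $Q_n x$. My plan is to start from $\parallel S_n x\parallel$, peel off $Q_n S_n = Q_n$ to relate it to $\parallel Q_n(S_n x)\parallel$, apply the unstable estimate $(es_2)$ of the hypothesis to the vector $S_n x$, and then re-express ${A}_m^n Q_n S_n x = {A}_m^n Q_n x$ using $(r_3)$ again. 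The reverse passage — bounding $\parallel S_n x\parallel$ from $\parallel Q_n x\parallel$ — is the step that forces me to invoke $\parallel P_n\parallel \leq M e^{\omega n}$ (equivalently $\parallel Q_n\parallel \leq \tilde{M}e^{\tilde\omega n}$), since $S_n = (I + P_n - R_n)Q_n$ from $(r_4)$ absorbs a factor $\parallel I + P_n - R_n\parallel$, which grows at most exponentially in $n$. That extra exponential factor $e^{\omega n}$ is exactly the kind of term that can be folded into the nonuniform prefactor $c^m$ (since $n \leq m$) without spoiling the splitting; the growth rate $b$ and hence the spectral gap $a < b$ survive unchanged.

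I expect the main obstacle to be bookkeeping the nonuniform constants so that the exponentially growing norm bounds on $P$ and $Q$ get correctly subsumed into a single new constant $c'$ (or a larger $N'$) on the $c^m$ side, while verifying that the growth rates $a$ and $b$ themselves are untouched — this is what guarantees the new estimates are still an exponential splitting of the same type (e.g. an exponential dichotomy stays a dichotomy, since the condition $0<a<1<b$ only constrains the rates). A secondary point to handle carefully is the direction of the inequality in $(es_2)$: because it bounds $\parallel Q_n x\parallel$ from above by $\parallel {A}_m^n Q_n x\parallel$, the exponential factor coming from $\parallel I + P_n - R_n\parallel$ must land on the correct side, and I would double-check that it multiplies the right-hand side (enlarging the constant there) rather than weakening the left-hand side.
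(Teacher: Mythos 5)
Your handling of $(es_1)$ coincides with the paper's (write $R_nx=P_nR_nx$ via $(r_1)$ and apply the hypothesis to the vector $R_nx$; indeed no boundedness is needed there), and your general plan for $(es_2)$ --- bridging between $(Q,S)$ with $(r_3)$, $(r_4)$ and absorbing exponential factors into the prefactor $c^m$ --- is also the paper's. However, two points in your proposal are genuinely wrong or incomplete. First, your ``key observation'' that $R$ inherits invariance from $P$ because $Range\,P_n=Range\,R_n$ is false: invariance of a projections sequence is equivalent to forward invariance of \emph{both} the range family and the kernel family, and nothing in the hypotheses constrains $Ker\,R_n$. Take $X=\mathbb{R}^2$, $A_n(x_1,x_2)=\left(\tfrac{x_1}{2},2x_2\right)$, $P_n(x_1,x_2)=(x_1,0)$, $R_n(x_1,x_2)=(x_1+x_2,0)$: the system admits a uniform exponential dichotomy with $P$, all projections are bounded, $Range\,R_n=Range\,P_n$, yet $A_0R_0(0,1)=\left(\tfrac{1}{2},0\right)\neq(2,0)=R_1A_0(0,1)$, so $R$ is not invariant. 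The paper never asserts (nor uses) invariance of $R$; its proof establishes only the two estimates for the pair $(R,S)$, which is how the conclusion must be read. The step you propose to ``check first'' is therefore unprovable, and a proof that needs it cannot be completed.

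Second, your chain for $(es_2)$ stops at the wrong quantity. As you describe it, it runs $b^{m-n}\|S_nx\|\leq\|I+P_n-R_n\|\,b^{m-n}\|Q_nx\|\leq\|I+P_n-R_n\|\,Nc^m\|{A}_m^nQ_nS_nx\|=\|I+P_n-R_n\|\,Nc^m\|{A}_m^nQ_nx\|$, but the inequality you must produce ends with $\|{A}_m^nS_nx\|$, not $\|{A}_m^nQ_nx\|$. The missing step --- precisely the paper's last one --- is ${A}_m^nQ_nx={A}_m^nQ_nS_nx=Q_m{A}_m^nS_nx$ (invariance of $Q$ together with $(r_3)$), followed by $\|Q_m{A}_m^nS_nx\|\leq\|Q_m\|\,\|{A}_m^nS_nx\|\leq 2Mp^m\|{A}_m^nS_nx\|$. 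This is a second, independent use of exponential boundedness, at time $m$ rather than $n$, and it is why the paper's constants come out as $N_1=4M^2N$ and $c_1=p^2c$; your bookkeeping, which locates all of the boundedness in the single factor $\|I+P_n-R_n\|$, cannot see it. Relatedly, $\|I+P_n-R_n\|$ is not controlled by bounds on $\|P_n\|$ and $\|Q_n\|$ alone: you need a bound on $\|R_n\|$, and a projection with the same range as $P_n$ can have arbitrarily large norm (e.g. $R_n(x_1,x_2)=(x_1+2^{n^2}x_2,0)$). The paper's proof accordingly opens by positing $\|P_n\|+\|R_n\|\leq Mp^n$, i.e.\ it tacitly assumes exponential boundedness of $R$ as well; your parenthetical claim that boundedness of $P$ (equivalently of $Q$) already makes $\|I+P_n-R_n\|$ exponentially bounded is unjustified.
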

\begin{proof}
Let $M,p\geq 1$ be two constants such that
$$\parallel P_n\parallel+\parallel R_n\parallel\leq Mp^n$$
for every $n\in\mathbb{N}.$ Assume that (\ref{A}) admits an (e.s.)
with projections sequence $P.$ Then, by Definition \ref{D: ES} and
Lemma \ref{L: 1}, we obtain
\begin{align*}
\parallel{A}_m^nR_nx\parallel
&=\parallel{A}_m^nP_nR_nx\parallel\leq
Na^{m-n}c^n\parallel R_nx\parallel\\
&\leq MNa^{m-n}(cp)^{n}\parallel x\parallel
=N_1a^{m-n}c_1^n\parallel x\parallel
\end{align*}
and
\begin{align*}
b^{m-n}\parallel S_nx\parallel &\leq b^{m-n}\parallel
I+P_n-R_n\parallel\cdot\parallel Q_nx\parallel\\
&\leq 2Mb^{m-n}p^n\parallel Q_nx\parallel\leq
2MNp^nc^m\parallel{A}_m^nQ_nx\parallel\\
&=2MNp^nc^m\parallel{A}_m^n(I+R_n+P_n)S_nx\parallel\\
&\leq 4M^2Np^{m+n}c^{m}\parallel{A}_m^nS_nx\parallel\\
&= N_1c_1^m\parallel{A}_m^nS_nx\parallel,
\end{align*}
for all $(m,n,x)\in\Delta\times X,$ where $N_1=4M^2N$ and
$c_1=p^2c.$
\end{proof}
\section{Exponential splitting with strongly invariant projections}
In this section we consider the particular case of exponential
splitting with projections sequence strongly invariant for a linear
discrete-time system.

Let $P:\mathbb{N}\rightarrow B(X)$ be a projections sequence
strongly invariant for the system (\ref{A}) and let
${B}:\Delta\rightarrow B(X),$ ${B}(m,n)={B}_m^n$ be the
skew-evolution operator associated to the pair of (\ref{A},P).
\begin{theorem}\label{T: esinv}
The system (\ref{A}) admits an exponential splitting with the
projections sequence $P$ if and only if there are $0<a<b$ and
$N,c\geq 1$ such that
\begin{equation}\tag{$es_1^{"}$}\label{es1"}
\parallel{A}_m^nP_nx\parallel\leq Nc^na^{m-n}\parallel
P_nx\parallel
\end{equation}
\begin{equation}\tag{$es_2^{"}$}\label{es2"}
b^{m-n}\parallel {B}_m^nQ_mx\parallel\leq N c^m\parallel
Q_mx\parallel
\end{equation}
for all $(m,n,x)\in\Delta\times X.$
\end{theorem}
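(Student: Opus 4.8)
The plan is to observe first that condition $(es_1^{"})$ is literally identical to $(es_1)$ in Definition \ref{D: ES}, so the entire content of the theorem collapses to establishing the equivalence $(es_2)\Longleftrightarrow(es_2^{"})$ under the standing hypothesis that $P$ is strongly invariant for (\ref{A}). The only tools I expect to need are the two defining identities of the skew-evolution operator, namely $(b_1)$: ${A}_m^n{B}_m^nQ_m=Q_m$ and $(b_2)$: ${B}_m^n{A}_m^nQ_n=Q_n$, together with the membership facts that ${A}_m^n$ carries $Ker\;P_n$ into $Ker\;P_m$ while ${B}_m^n$ carries $Ker\;P_m$ into $Ker\;P_n$. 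This is the exact analogue of Theorem \ref{T: esvsed}, with ${B}_m^n$ now taking over the role that $\left({A}_m^n\right)^{-1}$ played in the reversible case.

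For the implication $(es_2)\Longrightarrow(es_2^{"})$ I would fix $(m,n,x)\in\Delta\times X$ and set $y={B}_m^nQ_mx$. Since ${B}_m^n$ sends $Ker\;P_m$ into $Ker\;P_n=Range\;Q_n$, we get $Q_ny=y$, so applying $(es_2)$ to $y$ gives
\begin{equation*}
b^{m-n}\parallel{B}_m^nQ_mx\parallel=b^{m-n}\parallel Q_ny\parallel\leq Nc^m\parallel{A}_m^nQ_ny\parallel=Nc^m\parallel{A}_m^n{B}_m^nQ_mx\parallel.
\end{equation*}
Now $(b_1)$ collapses ${A}_m^n{B}_m^nQ_mx$ to $Q_mx$, yielding precisely $(es_2^{"})$ with the same constants.

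The converse $(es_2^{"})\Longrightarrow(es_2)$ runs symmetrically. Here I would set $y={A}_m^nQ_nx$; strong invariance forces $y\in Ker\;P_m$, hence $Q_my=y$. Applying $(es_2^{"})$ to $y$ produces
\begin{equation*}
b^{m-n}\parallel{B}_m^nQ_my\parallel\leq Nc^m\parallel Q_my\parallel=Nc^m\parallel{A}_m^nQ_nx\parallel,
\end{equation*}
and since ${B}_m^nQ_my={B}_m^n{A}_m^nQ_nx=Q_nx$ by $(b_2)$, the left-hand side equals $b^{m-n}\parallel Q_nx\parallel$, which is exactly $(es_2)$.

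I do not anticipate a serious obstacle: the argument is a matched pair of substitutions. The one point requiring care is checking at each step that the auxiliary vector $y$ genuinely lies in the correct kernel, so that the complementary projection $Q$ acts as the identity on it and the identities $(b_1)$ and $(b_2)$ apply cleanly; once these membership facts are secured, the two estimates transfer back and forth with no loss in the constants $N$, $c$, $a$, $b$.
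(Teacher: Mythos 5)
Your proof is correct and follows essentially the same route as the paper: both reduce to the equivalence $(\ref{es2})\Longleftrightarrow(\ref{es2"})$ and transfer the estimates by substituting the test vectors ${B}_m^nQ_mx$ and ${A}_m^nQ_nx$ into the respective inequalities, then collapsing via $(b_1)$ and $(b_2)$. The only cosmetic difference is that you verify the kernel-membership facts directly from strong invariance, where the paper invokes property $(b_3)$ of Lemma \ref{L: 22} and the commutation ${A}_m^nQ_n=Q_m{A}_m^n$; the constants are preserved identically in both arguments.
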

\begin{proof}
We have only to prove the equivalence
(\ref{es2})$\Longleftrightarrow$(\ref{es2"}.)

{\it Necessity.} We observe that from (\ref{es2}), ($b_1$) and
$(b_3)$ we obtain
\begin{align*}
b^{m-n}\parallel{B}_m^nQ_mx\parallel
&\stackrel{(b_3)}{=}b^{m-n}\parallel Q_n{B}_m^nQ_mx\parallel\leq
Nc^m\parallel{A}_m^nQ_n{B}_m^nQ_mx\parallel=\\
&=Nc^m\parallel
Q_m{A}_m^n{B}_m^nQ_mx\parallel\stackrel{(b_1)}{=}Nc^m\parallel
Q_mx\parallel
\end{align*}
for every $(m,n,x)\in\Delta\times X.$

{\it Sufficiency.} Similarly, from $(b_2)$ and (\ref{es2"}) it
results
\begin{align*}
b^{m-n}\parallel Q_nx\parallel
&\stackrel{(b_2)}{=}b^{m-n}\parallel{B}_m^n{A}_m^nQ_nx\parallel=b^{m-n}\parallel{B}_m^nQ_m{A}_m^nx\parallel\\
&\leq Nc^m\parallel
Q_m{A}_m^nx\parallel=Nc^m\parallel{A}_m^nQ_nx\parallel
\end{align*}
for all $(m,n,x)\in\Delta\times X.$
\end{proof}
\begin{corollary}\label{C: UES}
The linear discrete-time system (\ref{A}) admits a uniform
exponential splitting with projections sequence $P$ (strongly
invariant for (\ref{A})) if and only if there exist $0<a<b$ and
$N,c\geq 1$ such that
\begin{equation}\tag{$ues_1$}\label{ues1}
\parallel{A}_m^nP_nx\parallel\leq Na^{m-n}\parallel
P_nx\parallel
\end{equation}
\begin{equation}\tag{$ues_2$}\label{ues2}
b^{m-n}\parallel {B}_m^nQ_mx\parallel\leq N \parallel
Q_mx\parallel
\end{equation}
for all $(m,n,x)\in\Delta\times X.$
\end{corollary}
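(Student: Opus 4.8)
The plan is to derive Corollary~\ref{C: UES} directly from Theorem~\ref{T: esinv} by specializing the parameter $c$ to the value $1$. Recall that uniform exponential splitting was defined (case (i) in the list following Definition~\ref{D: ES}) as exponential splitting with $c=1$. Since $P$ is assumed strongly invariant here, Theorem~\ref{T: esinv} already tells us that (\ref{A}) admits an exponential splitting with projections sequence $P$ if and only if the skew-evolution-operator inequalities (\ref{es1"}) and (\ref{es2"}) hold for some $0<a<b$ and $N,c\geq 1$. So the entire corollary is the instance of that theorem in which one forces $c=1$ throughout.

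First I would observe that the implication from uniform exponential splitting to (\ref{ues1})--(\ref{ues2}) is immediate: by definition a uniform exponential splitting is an exponential splitting with $c=1$, so Theorem~\ref{T: esinv} (its necessity direction) applied with $c=1$ yields (\ref{es1"}) and (\ref{es2"}) with the factors $c^n$ and $c^m$ equal to $1$; these are precisely (\ref{ues1}) and (\ref{ues2}). Conversely, if (\ref{ues1}) and (\ref{ues2}) hold, then by taking $c=1$ they are special cases of (\ref{es1"}) and (\ref{es2"}), so the sufficiency direction of Theorem~\ref{T: esinv} gives an exponential splitting with projections sequence $P$ and with constant $c=1$, which is exactly a uniform exponential splitting by the definition of case (i).

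In practice I would write the proof as a single short paragraph noting that (\ref{ues1}) and (\ref{ues2}) are nothing but (\ref{es1"}) and (\ref{es2"}) with $c=1$, and that uniform exponential splitting is by definition the case $c=1$ of exponential splitting; the equivalence then follows at once from Theorem~\ref{T: esinv}. The only point that requires a word of care is to confirm that setting $c=1$ in the hypotheses of Theorem~\ref{T: esinv} corresponds under that theorem to setting $c=1$ in Definition~\ref{D: ES}, i.e.\ that the constant $c$ is carried through the equivalence unchanged. Inspecting the proof of Theorem~\ref{T: esinv} shows this is the case: the chain of estimates translating (\ref{es2}) into (\ref{es2"}) (and back) uses only the algebraic identities $(b_1)$, $(b_2)$, $(b_3)$ and preserves the same factor $c^m$, so no inflation of the constant $c$ occurs and the value $c=1$ is genuinely preserved in both directions. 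This is the main (and essentially only) thing to verify; everything else is a direct appeal to the already-proved theorem.
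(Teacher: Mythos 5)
Your proposal is correct and is exactly the argument the paper intends: the corollary is stated without proof precisely because it is the specialization of Theorem~\ref{T: esinv} to $c=1$, with uniform exponential splitting being by definition the case $c=1$ of Definition~\ref{D: ES}. Your extra check that the constant $c$ passes through the equivalence in Theorem~\ref{T: esinv} unchanged (the estimates there reuse the same factor $c^m$ via $(b_1)$--$(b_3)$) is the one point that needs verifying, and you verified it correctly.
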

Now we introduce a new concept of exponential splitting by
\begin{definition}\label{D: SES}
We say that the system (\ref{A}) admits a {\it strong exponential
splitting (s.e.s.)} if there exist a projections sequence $P$
strongly invariant for (\ref{A}) and the constants $0<a<b,$ $N\geq
1$ such that
\begin{equation}\tag{$ses_1$}\label{ses1}
\parallel{A}_m^nP_nx\parallel\leq Nc^na^{m-n}\parallel
x\parallel
\end{equation}
\begin{equation}\tag{$ses_2$}\label{ses2}
b^{m-n}\parallel {B}_m^nQ_mx\parallel\leq Nc^m \parallel
x\parallel
\end{equation}
for all $(m,n,x)\in\Delta\times X.$
\end{definition}
For the particular case $c=1,$ we say that system (\ref{A}) admits
a {\it uniform exponential splitting (u.e.s.)}.

The particular cases $0<a<1<b$ respectively $0<a<1<b$ and $c=1$
leads to the notions of {\it strong exponential dichotomy
(s.e.d.)} respectively {\it uniform strong exponential dichotomy
(u.s.e.s.)}.
\begin{remark}
The system (\ref{A}) admits a strong exponential dichotomy if and
only if there are a projections sequence $P$ strongly invariant for
(\ref{A}) and the constants $0<a<b$ and $N,c\geq 1$ such that
\begin{equation}\tag{$ses_1^{'}$}\label{ses1'}
\parallel{A}_m^nP_n\parallel\leq Nc^na^{m-n}
\end{equation}
\begin{equation}\tag{$ses_2^{'}$}\label{ses2'}
b^{m-n}\parallel {B}_m^nQ_m\parallel\leq Nc^m
\end{equation}
for all $(m,n)\in\Delta.$
\end{remark}
\begin{remark}
The system (\ref{A}) admits a strong exponential splitting if and
only if there are a projections sequence $P$ strongly invariant for
(\ref{A}) and four real constants $\alpha <\beta,$ $\gamma>0$ and
$N\geq 1$ such that
$$
\parallel{A}_m^nP_n\parallel\leq Ne^{\gamma n}e^{\alpha(m-n)}
$$
$$ e^{\beta(m-n)}\parallel
{B}_m^nQ_m\parallel\leq Ne^{\gamma m}
$$
for all $(m,n)\in\Delta.$
\end{remark}
For the particular case of strong exponential dichotomy we have
\begin{remark}
The system (\ref{A}) is strongly exponentially dichotomic if and
only if there exist a projections sequence $P$ strongly invariant
for (\ref{A}) and three constants $N,c\geq 1$ and $d\in (0,1)$ such
that
$$
\parallel{A}_m^nP_n\parallel\leq Nc^{n}d^{m-n}\parallel
P_n\parallel
$$
$$\parallel
Q_n\parallel\leq Nc^{m}d^{m-n}\parallel A_m^nQ_n\parallel
$$
for all $(m,n)\in\Delta.$
\end{remark}

A connection between (s.e.s.) and (e.s.) presents the following
\begin{theorem}\label{T: ses}
The system (\ref{A}) admits a strong exponential splitting with
projections sequence $P$ if and only if (\ref{A}) admits an
exponential splitting with respect to $P$ and $P$ is exponentially
bounded.
\end{theorem}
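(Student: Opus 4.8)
The plan is to reduce everything to the skew-evolution characterization of exponential splitting proved in Theorem \ref{T: esinv}: since $P$ is strongly invariant throughout this section, admitting an exponential splitting with respect to $P$ is equivalent to the pair of inequalities (\ref{es1"}) and (\ref{es2"}). Thus the whole statement amounts to comparing (\ref{ses1})--(\ref{ses2}) with (\ref{es1"})--(\ref{es2"}), the only difference being that the strong version bounds by $\|x\|$ while the ordinary version bounds by $\|P_nx\|$ and $\|Q_mx\|$. The two bridges are the idempotency relations $P_n^2=P_n$, $Q_m^2=Q_m$ and the norm estimate $\|P_n\|\le Mp^n$ defining exponential boundedness.

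For necessity, assume (s.e.s.). First I would extract exponential boundedness of $P$ by putting $m=n$ in (\ref{ses1}): since $A_n^n=I$, this gives $\|P_nx\|\le Nc^n\|x\|$, hence $\|P_n\|\le Nc^n$, i.e.\ exponential boundedness with $M=N$, $p=c$. Next, to recover (\ref{es1"}) I would apply (\ref{ses1}) to the vector $P_nx$ in place of $x$; since $A_m^nP_n(P_nx)=A_m^nP_nx$, the right-hand side becomes $Nc^na^{m-n}\|P_nx\|$, which is exactly (\ref{es1"}). Symmetrically, applying (\ref{ses2}) to $Q_mx$ and using $Q_m^2=Q_m$ yields (\ref{es2"}). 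By Theorem \ref{T: esinv} these two inequalities give an exponential splitting with respect to $P$.

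For sufficiency, assume an exponential splitting with respect to $P$ (so (\ref{es1"}) and (\ref{es2"}) hold) together with $\|P_n\|\le Mp^n$. I would pass to the strong inequalities by enlarging the projected norms on the right. In (\ref{es1"}) I estimate $\|P_nx\|\le Mp^n\|x\|$, obtaining $\|A_m^nP_nx\|\le MN(cp)^na^{m-n}\|x\|$. In (\ref{es2"}) I use $Q_m=I-P_m$, so $\|Q_m\|\le 1+Mp^m\le 2Mp^m$ because $M,p\ge 1$, hence $\|Q_mx\|\le 2Mp^m\|x\|$ and (\ref{es2"}) becomes $b^{m-n}\|B_m^nQ_mx\|\le 2MN(cp)^m\|x\|$. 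Choosing the common constants $N_1=2MN$ and $c_1=cp$, and keeping the unchanged rates $a<b$, gives both (\ref{ses1}) and (\ref{ses2}).

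The argument is essentially bookkeeping once this scheme is fixed, so I do not anticipate a genuine obstacle. The only point needing attention is that a single pair of constants must serve both inequalities in each direction; this is handled by taking the larger $N$-value and the common factor $c_1=cp$, while the growth rates $a$ and $b$ are never disturbed. The conceptual content is simply that exponential boundedness of $P$ is exactly what converts the $\|x\|$-bounds of the strong splitting into the $\|P_nx\|$- and $\|Q_mx\|$-bounds of the ordinary splitting, and back.
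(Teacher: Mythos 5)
Your proposal is correct and takes essentially the same route as the paper's own proof: both directions hinge on the characterization of Theorem \ref{T: esinv}, with necessity obtained by setting $m=n$ in (\ref{ses1}) and substituting $P_nx$ and $Q_mx$ into (\ref{ses1}) and (\ref{ses2}), and sufficiency obtained by converting $\parallel P_nx\parallel$ and $\parallel Q_mx\parallel$ into $\parallel x\parallel$-bounds via exponential boundedness. The only cosmetic difference is that the paper assumes $\parallel P_n\parallel+\parallel Q_n\parallel\leq Mp^n$ in one stroke, whereas you derive the bound on $\parallel Q_m\parallel$ from $Q_m=I-P_m$, which changes nothing essential.
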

\begin{proof}
{\it Necessity.} We assume that system (\ref{A}) admits a (s.e.s.)
with respect to $P.$ Then, from (\ref{ses1}) for $m=n,$ it results
that $P$ is exponentially bounded. The implications
(\ref{ses1})$\Longrightarrow$(\ref{es1}), respectively
(\ref{ses2})$\Longrightarrow$(\ref{es2}) result by substitution of
$x$ with $P_nx$ in (\ref{ses1}) respectively of $x$ with $Q_mx$ in
(\ref{ses2}).

{\it Sufficiency.} If the projections sequence $P$ is exponentially
bounded then there exist $M,p\geq 1$ such that
$$\parallel P_n\parallel+\parallel Q_n\parallel\leq Mp^n,$$
for every $n\in\mathbb{N}.$ If system (\ref{A}) admits a (e.s.)
with respect to $P$ it results (via Theorem \ref{T: esinv}) that
there exist $0<a<b$ and $N,c\geq 1$ such that
\begin{align*}
\parallel{A}_m^nP_nx\parallel &\leq N a^{m-n}c^n\parallel
P_nx\parallel\leq MN a^{m-n}(pc)^n\parallel x\parallel\\
&=N_1 a^{m-n}c_1^n\parallel x\parallel
\end{align*}
and
$$b^{m-n}\parallel{B}_m^nQ_mx\parallel\leq Nc^m\parallel Q_mx\parallel\leq N_1c_1^m\parallel x\parallel$$
for all $(m,n,x)\in\Delta\times X,$ where $N_1=MN$ and $c_1=pc.$
\end{proof}
In the particular case when $c=1,$ we obtain
\begin{corollary}
The system (\ref{A}) admits uniform strong exponential splitting
with projections sequence $P$ if and only if (\ref{A}) admits
uniform exponential splitting with respect to $P$ and $P$ is
bounded.
\end{corollary}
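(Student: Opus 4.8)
The plan is to obtain this corollary as the $c=1$ specialization of Theorem \ref{T: ses}, whose proof structure carries over once ``exponentially bounded'' is replaced by ``bounded'' (the $p=1$ case of Definition \ref{D: P-expb}) and the factors $c^n,c^m$ are set equal to $1$. I would treat the two directions separately, using Corollary \ref{C: UES} to put uniform exponential splitting into its skew-evolution form so that both sides of the equivalence speak the same language.

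For necessity, I would assume (\ref{A}) admits a uniform strong exponential splitting with respect to $P$, i.e. inequalities (\ref{ses1}) and (\ref{ses2}) hold with $c=1$. Setting $m=n$ in (\ref{ses1}) and using ${A}_n^n=I$ gives $\|P_nx\|\le N\|x\|$ for all $x$, hence $\|P_n\|\le N$ for every $n$, so $P$ is bounded. To recover the uniform exponential splitting I would substitute $x$ by $P_nx$ in (\ref{ses1}) and by $Q_mx$ in (\ref{ses2}); idempotency of $P_n$ and $Q_m$ collapses the left-hand sides and yields exactly (\ref{ues1}) and (\ref{ues2}), which by Corollary \ref{C: UES} is the uniform exponential splitting with respect to $P$.

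For sufficiency, boundedness of $P$ furnishes a constant $M\ge 1$ with $\|P_n\|+\|Q_n\|\le M$ for all $n$ (the bound on $Q=I-P$ being automatic). Writing the uniform exponential splitting in the form (\ref{ues1}), (\ref{ues2}) via Corollary \ref{C: UES}, I would then estimate $\|{A}_m^nP_nx\|\le Na^{m-n}\|P_nx\|\le MNa^{m-n}\|x\|$ and $b^{m-n}\|{B}_m^nQ_mx\|\le N\|Q_mx\|\le MN\|x\|$, which are precisely (\ref{ses1}) and (\ref{ses2}) with $c=1$ and new constant $N_1=MN$. This completes the equivalence.

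The argument is entirely routine, and I expect no genuine obstacle beyond bookkeeping. The one point deserving attention is the passage between the two equivalent formulations of uniform exponential splitting: the definition phrases the second inequality through ${A}_m^n$ acting on $Q_nx$, whereas the strong version uses the skew-evolution operator ${B}_m^n$ acting on $Q_mx$. Invoking Corollary \ref{C: UES} (the $c=1$ case of Theorem \ref{T: esinv}) at the right moment is what makes both implications line up, and it is exactly there that strong invariance of $P$, which guarantees the existence of ${B}_m^n$, is used.
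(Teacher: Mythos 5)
Your proof is correct and follows essentially the same route as the paper: the corollary is obtained there as the $c=1$ specialization of Theorem \ref{T: ses}, whose proof (substituting $x\mapsto P_nx$ in the first inequality and $x\mapsto Q_mx$ in the second for necessity, and using the boundedness constant $M$ together with the skew-evolution formulation from Theorem \ref{T: esinv} for sufficiency) is exactly what you reproduce with $c=1$ and $p=1$, invoking Corollary \ref{C: UES} in place of Theorem \ref{T: esinv}. There are no gaps.
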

\begin{remark}
If the system (\ref{A}) admits a strong exponential splitting then
it also admits an exponential splitting. The following example shows
that the converse is not true.
\end{remark}
\begin{example}\label{E: 2}
Let $X=\mathbb{R}^2$  endowed with the norm
\begin{equation*}
\parallel x\parallel=\max\{|x_1|,|x_2|\}|\;\;\text{for}\;\;x=(x_1,x_2)\in X.
\end{equation*}
Let $(P_n)$ be a sequence in $B(X)$ defined by
\begin{equation*}
P_n(x_1,x_2)=\left(x_1+\left(2^{n^2}-1\right)x_2,0\right).
\end{equation*}
It is a simple verification to see that $(P_n)$ is a projections
sequence with the complementary
\begin{equation*}
Q_n(x_1,x_2)=\left(\left(1-2^{n^2}\right)x_1,x_2\right).
\end{equation*}
Moreover,
\begin{equation*}
\parallel P_nx\parallel=2^{n^2}\parallel x\parallel,
\end{equation*}
\begin{equation*}
\parallel Q_nx\parallel=\left(2^{n^2}-1\right)|x_{2}|\leq\parallel Q_mx\parallel
\end{equation*}
and
\begin{equation*}
P_mP_n=P_n,\;\;Q_mQ_n=Q_m,\;\;Q_mP_n=0
\end{equation*}
for all $(m,n,x)\in\Delta\times X.$

We consider the linear discrete-time system (\ref{A}) defined by
the sequence $(A_n)$ given by
\begin{equation*}
A_n=2{P_n}+4Q_{n+1},\;\;\text{for all}\;n\in\mathbb{N}.
\end{equation*}
We observe that
\begin{equation*}
A_nP_n=P_{n+1}A_n=2{P_n},\;\;\text{for every}\;\;n\in\mathbb{N},
\end{equation*}
hence $(P_n)$ is invariant for (\ref{A}). The evolution operator
asociated to (\ref{A}) is
\begin{equation*}
{A}_m^n=2^{m-n}P_n+4^{m-n}Q_m,\;\;\text{for all}\;\;(m,n)\in\Delta.
\end{equation*}
We shall prove that $(P_n)$ is strongly invariant for (\ref{A}).

Let $(m,n)\in\Delta.$ In order to prove the injectivity of
${A}_m^n$ we consider
\begin{equation*}
x=Q_mz\in Ker\;P_n\;\;\text{with}\;\;{A}_m^nx=0.
\end{equation*}
Because
\begin{equation*}
0={A}_m^nx={A}_m^nQ_nz=4^{m-n}Q_mz,
\end{equation*}
it follows that $z\in Ker\;Q_m=Range\;P_m$ and hence
\begin{equation*}
x=Q_nz=Q_nP_mz=0.
\end{equation*}
To prove the surjectivity of ${A}_m^n$ from $Ker\;P_n$ to
$Ker\;P_m=Range\;Q_m,$ let $y=Q_mz\in Ker\;P_m.$ Then
\begin{equation*}
x=4^{n-m}Q_nz\in Ker\;P_n
\end{equation*}
with
\begin{equation*}
{A}_m^nx=4^{n-m}{A}_m^nQ_nz=Q_mz=y.
\end{equation*}
Thus $P=(P_n)$ is strongly invariant for system (\ref{A}) and the
skew-evolution operator associated to the pair
$(\text{\ref{A}},P)$ is
\begin{equation*}
{B}_m^nQ_m=4^{n-m}Q_n,\;\;\text{for all}\;\;(m,n)\in\Delta.
\end{equation*}
Furthermore, from
\begin{equation*}
\parallel{A}_m^nP_nx\parallel\leq 2^{m-n}\parallel P_nx\parallel
\end{equation*}
and
\begin{equation*}
4^{m-n}\parallel{B}_m^nQ_mx\parallel=\parallel
Q_nx\parallel\leq\parallel Q_mx\parallel
\end{equation*}
for all $(m,n,x)\in\Delta\times X,$ it results that (\ref{A}) admits
an (u.e.s.) (hence an (e.s.)) with respect to $(P_n).$

If we suppose that (\ref{A}) admits a (s.e.s.) with projections
sequence $(P_n)$ then, by Theorem \ref{T: ses}, it results that
$(P_n)$ is exponentially bounded, which is a contradiction because
\begin{equation*}
\parallel P_n\parallel=2^{n^2},\;\;\text{for
every}\;\;n\in\mathbb{N}.
\end{equation*}
\end{example}
\begin{remark}
 If the system (\ref{A}) admits a uniform
exponential splitting then it also admits an uniform exponential
dichotomy. The previous example shows that the converse implication
is not valid. More precise, if we suppose that system (\ref{A})
admits a uniform exponential dichotomy then there are two constants
$N\geq 1$ and $d\in (0,1)$ such that
$$\parallel A_m^nP_nx\parallel\leq Nd^{m-n}\parallel P_nx\parallel$$
for all $(m,n,x)\in\Delta\times X.$ In particular, for $m=2n$ we
have that
$$\left(\frac{2}{d}\right)^{n}\parallel\leq N$$
for all $n\in\Bbb{N},$ which is a contradiction.
\end{remark}
\begin{remark}
It is obvious that (u.e.s.)$\Longrightarrow$(e.s.). The following
example shows that the converse implication is not true.
\end{remark}
\begin{example}\label{E: 3}
Let $(P_n)$ be the projections sequence considered in Example
\ref{E: 2} and the linear discrete-time system (\ref{A}) defined by
the sequence $(A_n)$ given by
\begin{equation*}
A_n=2^{a_n-a_{n+1}}P_n+4^{a_{n+1}-a_n}Q_{n+1},
\end{equation*}
where
$$a_n=\frac{n}{1+2\cos^{2}\frac{n\pi}{2}},\;\;\text{for
all}\;\;n\in\mathbb{N}.$$

We have the evolution operator associated to (\ref{A})
\begin{equation*}
{A}_{m}^{n}=2^{a_n-a_m}P_n+4^{a_m-a_n}Q_m,\;\;\text{for
all}\;\;(m,n)\in\Delta
\end{equation*}
and respectively
\begin{equation*}
{B}_m^nQ_m=4^{a_n-a_m}Q_n,\;\;\text{for all}\;\;(m,n)\in\Delta
\end{equation*}
the skew-evolution operator associated to the pair
$(\text{\ref{A}},P).$ We observe that for all $(m,n)\in\Delta$ we
obtain
\begin{align*}
a_n-a_m
&=\frac{n-m}{3}+\frac{2n\sin^{2}\frac{n\pi}{2}}{3\left(1+2\cos^{2}\frac{n\pi}{2}\right)}-\frac{2m\sin^{2}\frac{m\pi}{2}}{3\left(1+2\cos^{2}\frac{m\pi}{2}\right)}\\
&\leq \frac{n-m}{3}+\frac{2n}{3}
\end{align*}
hence
\begin{equation*}
a_m-a_n\geq \frac{m-n}{3}-\frac{2n}{3}.
\end{equation*}
Then
\begin{equation*}
\parallel{A}_m^nP_nx\parallel=2^{a_n-a_m}\parallel
P_nx\parallel\leq 4^{\frac{2n}{3}}2^{-\frac{1}{3}(m-n)}\parallel
P_nx\parallel
\end{equation*}
and respectively
\begin{equation*}
4^{\frac{1}{3}(m-n)}\parallel{B}_m^nQ_mx\parallel\leq
4^{\frac{2}{3}m}\parallel Q_mx\parallel,
\end{equation*}
for all $(m,n,x)\in\Delta\times X.$ Finally, we observe that for
$N=1,$ $a=2^{-\frac{1}{3}},$ $b=4^{\frac{1}{3}}$ and
$c=4^{\frac{2}{3}}$ the system (\ref{A}) admits an (e.s.).

If we suppose that system (\ref{A}) admits an (u.e.s.) then there
exist the constants $N\geq 1,$ $\alpha\in\mathbb{R}$ such that
\begin{equation*}
2^{a_n-a_m}\parallel
P_nx\parallel=\parallel{A}_m^nP_nx\parallel\leq Ne^{\alpha
(m-n)}\parallel P_nx\parallel,
\end{equation*}
for all $(m,n,x)\in\Delta\times X.$ In particular, for $n=2k+1$ and
$m=n+1$ it follows that
\begin{equation*}
2^{\frac{4k+1}{3}}\parallel P_nx\parallel\leq Ne^{\alpha}\parallel
P_nx\parallel,
\end{equation*}
which is a contradiction.
\end{example}
\begin{remark}
 We observe that Example \ref{E: 2} shows that
(u.e.s.)$\nRightarrow$(s.e.s.). The following example presents a
system (\ref{A}) which admits (s.e.s.) and it does not admits
(u.e.s.).
\end{remark}
\begin{example}\label{E: 4}
Let $X=l^{\infty}(\mathbb{R})$ be the Banach space considered in
Example \ref{E: 2} and let $(P_n)$ be the projections sequence
defined by
$$
P_n(x_0,x_1,x_2,\ldots
)=\left(x_0+\left(2^{n}-1\right)x_1,0,x_2+\left(2^{n}-1\right)x_3,0,\ldots\right)
$$
with the complementary
$$
Q_n(x_0,x_1,x_2,\ldots
)=\left(\left(1-2^{n}\right)x_1,x_1,\left(1-2^{n}\right)x_3,x_3,\ldots\right).
$$
It is immediate to see that
$$
\parallel P_nx\parallel=2^{n}\parallel x\parallel\;\;\text{and}\;\;\parallel Q_n\parallel=\left(2^n-1\right)\sup\limits_{n\geq 0}|x_{2n+1}|\leq\parallel Q_mx\parallel
$$
for all $(m,n,x)\in\Delta\times X.$

Let (\ref{A}) be the linear discrete-time system defined by the
sequence
$$
A_n=2^{a_n-a_{n+1}}P_n+4^{a_{n+1}-a_n}Q_{n+1},
$$
where
$$a_n=\frac{n}{1+2\cos^{2}\frac{n\pi}{2}},\;\;\text{for
all}\;\;n\in\Bbb{N}.
$$
As in Example \ref{E: 3} it follows that $(P_n)$ is strongly
invariant for (\ref{A}) with
$$
{A}_{m}^{n}=2^{a_n-a_m}P_n+4^{a_m-a_n}Q_m,
$$
$$
{B}_m^nQ_m=4^{a_n-a_m}Q_n,\;\;\text{for all}\;\;(m,n)\in\Delta.
$$
Then
$$\parallel A_m^nP_nx\parallel\leq 4^{\frac{2n}{3}}2^{-\frac{1}{3}(m-n)}\parallel P_nx\parallel\leq 4^{\frac{5n}{3}}2^{-\frac{1}{3}(m-n)}\parallel x\parallel$$
and
$$4^{\frac{m-n}{3}}\parallel B_m^nQ_mx\parallel\leq 4^{\frac{2m}{3}}\parallel Q_mx\parallel\leq 4^{\frac{5m}{3}}\parallel x\parallel,$$
for all $(m,n,x)\in\Delta\times X.$

Thus (\ref{A}) admits a (s.e.s.) with respect to $(P_n).$ If we
suppose that (\ref{A}) admits a (u.e.s.) with respect to $(P_n)$
then there are $N\geq 1,$ $\alpha\in\mathbb{R}$ such that
$$2^{a_n-a_m}2^n\parallel x\parallel=2^{a_n-a_m}\parallel P_nx\parallel=\parallel A_m^nP_nx\parallel\leq Ne^{\alpha(m-n)}\parallel x\parallel$$
for all $(m,n,x)\in\Delta\times X.$ In particular for $m=2k+1$ and
$n=2k$ we obtain
$$4^{k+3}\leq Ne^{\alpha}$$
for every $k\in\Bbb{N},$ which is a contradiction.
\end{example}

\begin{remark}
The connections between the four splitting concepts considered in
this paper can be synthesized in the following diagram
\begin{eqnarray*}
(u.s.e.s.) & \Longrightarrow & (u.e.s.)\nonumber\\
\Downarrow  &  & \Downarrow\nonumber\\
(s.e.s.) & \Longrightarrow & (e.s.)\nonumber\\
\end{eqnarray*}
The presented examples shows that the implications
(s.e.s.)$\Rightarrow$(u.s.e.s.),\\  (e.s.)$\Rightarrow$(s.e.s.),
(e.s.)$\Rightarrow$(u.e.s.), (u.e.s.)$\Rightarrow$(u.s.e.s.),
(e.s.)$\Rightarrow$(u.s.e.s.),\\ (u.e.s.)$\Rightarrow$(s.e.s.) and
  (s.e.s.)$\Rightarrow$(u.e.s.) are not valid.

Finally, we obtained that the studied splitting concepts are
distinct. As a particular case, similar conclusions hold for the
dichotomy concepts defined in this paper.
\end{remark}

\section{Conclusion}
In this paper we consider three concepts of exponential splitting using two concepts of projections sequences: invariant and strongly invariant for general nonivertible and nonautonomous linear discrete-time systems in Banach spaces. These concepts are natural generalizations of some well-known concepts of dichotomies. Characterizations of these concepts of exponential splitting and connections (implications and counterexamples) between them are exposed.



\begin{thebibliography}{00}

\bibitem{agarwal}
\textsl{R.P. Agarwal}, Difference equations and inequalities:
theory, methods and applications, Second edition, Marcel Dekker,
2000.

\bibitem{aulbach1}
\textsl{B. Aulbach, J. Kalbrenner}, Exponential forward splitting
for noninvertible difference equations, Comp. Math. Appl., 42
(2001), 743-751.

\bibitem{aulbach2}
\textsl{B. Aulbach, S. Siegmund}, The dichotomy spectrum for
noninvertible systems of linear difference equations, J.
Difference Equ. Appl., 7 (2001), 895-913.


\bibitem{barreira1}
\textsl{L. Barreira,  M. Fan, C. Valls, Z. Jimin}, Robustness of
nonuniform polynomial dichotomies for difference equations. Topol.
Methods Nonlinear Anal. 37 (2011), no. 2, 357-376.

\bibitem{bareira2}
\textsl{L. Barreira, C. Valls}, Robust nonuniform dichotomies and
parameter dependence, J. Math. Anal. Appl. 373 (2011) 690-708.

\bibitem{bareira3}
\textsl{L. Barreira, C. Valls}, Lyapunov sequences for exponential
dichotomies, J. Differential Equations, 246 (2009), 183-215.

\bibitem{barreira4}
\textsl{L. Barreira, C. Valls}, Quadratic Lyapunov sequences and arbitrary growth rates, Discrete Contin. Dyn. Syst.26 (2010), 63-74.

\bibitem{bento1}
\textsl{A. Bento, C. Silva}, Stable manifolds for nonuniform
polynomial dichotomies, J. Funct. Anal., 257 (2009), 122-148.

\bibitem{bento2}
\textsl{A. Bento, C. Silva},$(\mu,\nu)$-dichotomies and local
dynamics of difference equations. Nonlinear Anal. 75 (2012), no.
1, 78-90.

\bibitem{berezanski}
\textsl{L. Berezansky, E. Braverman}, On exponential dichotomy,
Bohl-Perron type theorems and stability of difference equations, J.
Math. Anal. Appl., 304 (2005), 511-530.

\bibitem{coffman}
\textsl{C.V. Coffman, J.J. Sch$\ddot{a}$ffer}, Dichotomies for
linear difference equations, Math. Ann., 172 (1967), 139-166.

\bibitem{coppel}
\textsl{W.A. Coppel}, Dichotomies in Stability Theory, Lecture Notes
in Math., Springer-Verlag, Berlin, 1978.

\bibitem{daleckii}
\textsl{J.L. Daleckii, M.G. Krein}, Stability of Solutions of
Differential Equations in Banach Space, Trans. Math. Monographs,
vol. 43, Amer. Math. Soc., Providence, 1974.

\bibitem{henry}
\textsl{D. Henry}, Geometric Theory of Semilinear Parabolic
Equations, Springer-Verlag, Berlin, 1981.

\bibitem{huy}
\textsl{N.T. Huy, N.V. Minh}, Exponential dichotomy of difference
equations and applications to evolution equations on the
half-line, Comp. Math. Appl., Vol. 42 (2001), 301-311.

\bibitem{huy2}
\textsl{N.T. Huy, V.T. Ngoc Ha}, Exponential dichotomy of
difference equations in $l_{p}$-phase spaces on the half-line,
Adv. Difference Equ., Article ID 58453 (2006), 1-14.

\bibitem{massera}
\textsl{J.L. Massera, J.J. Sch$\ddot{a}$ffer}, Linear Differential
Equations and Function Spaces, Academic Press, New York, 1966.

\bibitem{papashi}
\textsl{G. Papaschinopoulos, J. Schinas}, Criteria for an
exponential dichotomy of difference equations, Czechoslovak Math.
J.,  35(1985), issue 2, 295-299.

\bibitem{perron}
\textsl{O. Perron}, Die Stabilit$\ddot{a}$tsfrage bei
Differentialgleichungen, Math. Z. 32(1), 1930.

\bibitem{pinto}
\textsl{M. Pinto}, Discrete dichotomies, Comput. Math. Appl., 28(
1994), 259-270.

\bibitem{popa}
\textsl{I.-L. Popa, M. Megan, T. Ceau\c su}, Exponential dichotomies
for linear discrete-time systems in Banach spaces, Appl. Anal.
Discrete Math., 6(2012), 140-155.

\bibitem{megan}
\textsl{P. Preda, M. Megan}, Criteria for exponential dichotomy of
difference equations, An. Univ. Timi\c soara Ser. Mat.-Inform.,
vol XXVI (1988), 49-55.

\bibitem{potche}
\textsl{C. P$\ddot{o}$tzsche}, Geometric Theory of Discrete
Nonautotnomous Dynamical Systems, Springer-Verlag, Berlin, 2002.

\bibitem{sasu}
\textsl{A.L. Sasu}, Exponential dichotomy and dichotomy radius for
difference equations, J. Math. Anal. Appl., 344(2008), 906-920.

\bibitem{tali}
\textsl{L. Ta}, Die Stabilit$\ddot{a}$tsfrage bei
Differenzengleichungen, Acta Math., 63 (1934), 99-141.

\bibitem{zhang}
\textsl{J. Zhang, M. Fan, H. Zhu},  Necessary and sufficient
criteria for the existence of exponential dichotomy on time scales,
Comput. Math. Appl., 60(2010),  no.8, 2387-2398.

\bibitem{zhou}
\textsl{L. Zhou, K. Lu, W. Zhang}, Roughness of tempered exponential dichotomies for infinite-dimensional random difference equations, J. Differential Equations 254 (2013), 4024-4046.
\end{thebibliography}
\end{document}